\newtheorem*{theorem}{Theorem}
\newtheorem{lemma}{Lemma}[section]
\newtheorem{corollary}[lemma]{Corollary}
\def\Z{\mathbb Z}
\def\adj{\!\sim\!}
\def\hs1{\hskip 1pt} 
\begin{document}

\title{${}$ \vskip -1.2cm Half-arc-transitive graphs of arbitrary even valency greater than 2}

\author{
Marston D.E. Conder 
\\[+1pt]
{\normalsize Department of Mathematics, University of Auckland,}\\[-4pt] 
{\normalsize Private Bag 92019, Auckland 1142, New Zealand} \\[-3pt]
{\normalsize m.conder@auckland.ac.nz}\\[+4pt]
\and
Arjana \v{Z}itnik
\\[+1pt]
{\normalsize Faculty of Mathematics and Physics, University of Ljubljana,} \\[-4pt] 
{\normalsize Jadranska 19, 1000 Ljubljana, Slovenia} \\[-3pt]
{\normalsize Arjana.Zitnik@fmf.uni-lj.si} 
}

\date{}

\maketitle

\begin{abstract}
A {\em half-arc-transitive\/} graph is a regular graph that is both vertex- and 
edge-transitive, but is not arc-transitive. 
If such a graph has finite valency, then its valency is even, and greater than $2$. 
In 1970, Bouwer proved that there exists a half-arc-transitive graph of every 
even valency greater than 2, by giving a construction for a family of graphs now 
known as $B(k,m,n)$, defined for every triple $(k,m,n)$ of integers greater than $1$ 
with $2^m \equiv 1 \mod n$.
In each case, $B(k,m,n)$ is a $2k$-valent vertex- and edge-transitive graph 
of order $mn^{k-1}$, and Bouwer showed that $B(k,6,9)$ is half-arc-transitive for all $k > 1$.  

For almost 45 years the question of exactly which of Bouwer's graphs are 
half-arc-transitive and which are arc-transitive has remained open, despite many 
attempts to answer it. 
In this paper, we use a cycle-counting argument to prove that almost all of the graphs 
constructed by Bouwer are half-arc-transitive.  In fact, we prove that $B(k,m,n)$ 
is arc-transitive only when $n = 3$, or $(k,n) = (2,5)$, 
or $(k,m,n) = (2,3,7)$ or $(2,6,7)$ or $(2,6,21)$. 
In particular, $B(k,m,n)$ is half-arc-transitive whenever $m > 6$ and $n > 5$.   
This gives an easy way to prove that there are infinitely many half-arc-transitive 
graphs of each even valency $2k > 2$. \\

\bigskip\noindent 
Keywords: graph, half-arc transitive, edge-transitive, vertex-transitive, arc-transitive,
  automorphisms, cycles

\smallskip\noindent 
Mathematics Subject Classification (2010):
05E18,   
20B25.  

\end{abstract}

\section{Introduction}
\label{intro} 

In the 1960s, W.T.~Tutte \cite{Tutte} proved that if a connected regular graph of odd valency is both 
vertex-transitive and edge-transitive, then it is also arc-transitive.  
At the same time, Tutte observed that it was not known whether the same was true for even valency. 
Shortly afterwards, I.Z.~Bouwer \cite{Bouwer} constructed a family of vertex- and edge-transitive 
graphs of any given even valency $2k > 2$, that are not arc-transitive.   

Any graph that is vertex- and edge-transitive but not arc-transitive is now known 
as a {\em half-arc-transitive\/} graph.  Every such graph has even valency, and 
since connected graphs of valency $2$ are cycles, which are arc-transitive, 
the valency must be at least $4$. 

Quite a lot is now known about  half-arc-transitive graphs, especially in the 
$4$-valent case --- see \cite{ConderMarusic,ConderPotocnikSparl,Marusic1998b,DM05} 
for example. 
Also a lot of attention has been paid recently to half-arc-transitive group actions 
on edge-transitive graphs --- see \cite {HujdurovicKutnarMarusic} for example. 
In contrast, however, relatively little is known about  half-arc-transitive graphs of higher valency.   
Bouwer's construction produced a vertex- and edge-transitive graph $B(k,m,n)$ 
of order $mn^{k-1}$ and valency $2k$ for every triple $(k,m,n)$ of integers 
greater than $1$ such that $2^{m} \equiv 1$ mod $n$, and Bouwer proved in \cite{Bouwer} 
that $B(k,6,9)$ is half-arc-transitive for every $k > 1$.   Bouwer also showed that the 
latter is not true for every triple $(k,m,n)$; for example, $B(2,3,7)$, $B(2,6,7)$ and $B(2,4,5)$ 
are arc-transitive. 

For the last $45$ years, the question of exactly which of Bouwer's graphs are 
half-arc-transitive and which are arc-transitive has remained open, despite a 
number of attempts to answer it. 
Three decades after Bouwer's paper, C.H.~Li and H.-S.~Sim \cite{LiSim} 
developed a quite different construction for a family of half-arc-transitive graphs, 
using Cayley graphs for metacyclic $p$-groups, 
and in doing this, they proved the existence of infinitely many half-arc-transitive graphs 
of each even valency $2k > 2$. 
Their approach, however, required a considerable amount of group-theoretic analysis. 

In this paper, we use a cycle-counting argument to prove that almost all of the graphs 
constructed by Bouwer in \cite{Bouwer} are half-arc-transitive, and thereby give an 
easier proof of the fact that there exist infinitely many half-arc-transitive graphs 
of each even valency $2k > 2$.  
Specifically, we prove the following: 

\begin{theorem} 
\label{thm:main} 
The 
graph $B(k,m,n)$ is arc-transitive only if and only if $n = 3$, or $(k,n) = (2,5)$, 
or $(k,m,n) = (2,3,7)$ or $(2,6,7)$ or $(2,6,21)$. 
In particular, $B(k,m,n)$ is half-arc-transitive whenever $m > 6$ and $n > 5$. 
\end{theorem}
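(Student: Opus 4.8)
The plan is to use Bouwer's theorem that $B=B(k,m,n)$ is vertex- and edge-transitive, so that showing $B$ is half-arc-transitive is the same as showing it is \emph{not} arc-transitive. First I would fix the canonical orientation $\vec B$ in which every edge is directed from its endpoint at level $a$ (the $\mathbb Z_m$-coordinate) to its endpoint at level $a+1$. The ring-coordinate translations, the doubling shift $(a;\mathbf x)\mapsto(a+1;2\mathbf x)$, and the permutations of the $k-1$ ring coordinates are all orientation-preserving, and Bouwer's automorphisms witnessing edge-transitivity also preserve this orientation; hence the orientation-preserving subgroup $G^+$ is transitive on the out-arcs of $\vec B$, and likewise on the in-arcs. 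Since $B$ is edge-transitive, $B$ is arc-transitive if and only if some automorphism reverses an edge, i.e.\ maps an out-arc to an in-arc; equivalently, by vertex-transitivity, the stabiliser of a vertex $v$ contains an element interchanging the $k$ out-neighbours of $v$ with its $k$ in-neighbours. The theorem thus reduces to deciding, for each admissible triple, whether such an orientation-reversing automorphism exists.

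Second, I would attach to each arc a cycle-counting invariant that is preserved by every automorphism but is sensitive to the orientation. The natural quantity is the number of cycles of a prescribed short length through a given edge, refined by recording how many of the cycle's edges point ``up'' and how many ``down'' relative to each endpoint. The crucial feature of $B$ is that an up-step from level $a$ changes a ring coordinate by $2^a$, so a cycle that climbs through several levels is governed by a congruence of the shape $\sum_i \varepsilon_i 2^{a_i}\equiv 0 \pmod n$ with $\varepsilon_i=\pm 1$; because successive levels contribute successive powers of $2$, the solution set of such relations is not symmetric under interchanging the roles of up-steps and down-steps. I would isolate the shortest cycles for which this asymmetry is visible — for $k\ge 3$ these are genuinely three-level cycles, since the two-level alternating cycles are orientation-symmetric and contribute equally to both counts — and show that the refined count through an out-arc differs from that through an in-arc. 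An orientation-reversing automorphism would have to equate these two counts, so wherever they differ, no such automorphism exists and $B$ is half-arc-transitive.

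Third, I would determine exactly when the two counts \emph{coincide}, which by the previous step is a purely number-theoretic question about the governing congruences modulo $n$, and this is where the exceptions arise. When $n=3$ we have $2\equiv -1$, so doubling is negation and an explicit level-reversing, coordinate-negating map is an orientation-reversing automorphism; this makes $B(k,m,3)$ arc-transitive for all admissible $k,m$ and accounts for the whole family $n=3$. For the remaining exceptions, all of which have $k=2$, the single ring coordinate removes the two-level cycles and forces the shortest relevant cycles to span several levels, so the congruence analysis is more delicate; here I would exhibit explicit orientation-reversing automorphisms for $(k,n)=(2,5)$ and for the three sporadic triples $(2,3,7)$, $(2,6,7)$ and $(2,6,21)$, and then verify that for every other triple the governing congruence admits no ``symmetric'' solution, so that the counts differ and $B$ is half-arc-transitive. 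In particular this yields the clean region $m>6$, $n>5$ in which $B$ is always half-arc-transitive.

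The main obstacle is precisely the number-theoretic heart of the third step: proving that the orientation-asymmetry of the cycle counts fails \emph{only} for the listed triples. This demands a uniform understanding of which signed sums of consecutive powers of $2$ vanish modulo $n$ — controlled by the multiplicative order of $2$ modulo $n$ and its divisors — together with a careful separate treatment of the small cases $m\le 6$ and $n\le 5$, where the cyclic level structure produces additional coincidences and where the sporadic $k=2$ exceptions live. Bounding these coincidences, and confirming that the automorphisms constructed for the exceptional triples really are the only orientation-reversing ones, is the delicate part; by contrast, setting up the orientation, defining the invariant, and writing down the explicit automorphisms is comparatively routine bookkeeping.
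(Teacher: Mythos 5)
Your overall strategy---reduce, via Bouwer's vertex- and edge-transitivity, to deciding whether some automorphism reverses the arc from $(0,\mathbf{0})$ to $(1,\mathbf{0})$, rule this out by counting short cycles, and exhibit explicit arc-reversing automorphisms in the exceptional cases---is the same as the paper's, and your list of exceptions (including the automorphism $(a,\mathbf{b})\mapsto(1\!-\!a,-\mathbf{b})$ for $n=3$) is correct. But there is a genuine gap at the heart of your second step: your proposed invariant, the number of short cycles through an edge \emph{refined by the up/down pattern of the cycle relative to the canonical orientation}, is not a graph-theoretic invariant. It is preserved only by automorphisms that respect the canonical orientation, whereas the hypothetical automorphism $\xi$ you must rule out is merely assumed to reverse one arc; nothing forces it to carry up-steps to down-steps globally. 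This is not a removable technicality: in the arc-transitive member $B(2,3,7)$, the explicit automorphism taking $(a,0)$ to $(1\!-\!a,0)$, $(a,1)$ to $(a\!+\!1,2)$, $(a,2)$ to $(a\!-\!1,1)$, etc., reverses the arc from $(0,0)$ to $(1,0)$ yet sends the up-arc from $(0,1)$ to $(1,1)$ to the \emph{up}-arc from $(1,2)$ to $(2,2)$, so automorphisms of these graphs really can scramble the orientation rather than reverse it. The same conflation appears in your first step, where ``some automorphism reverses an edge'' is declared equivalent to ``the stabiliser of $v$ interchanges the set of out-neighbours with the set of in-neighbours'': arc-transitivity gives an automorphism swapping the two ends of an edge, but not, without further argument, a stabiliser element swapping the two neighbour-sets.

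Repairing this requires replacing your orientation-sensitive count by purely combinatorial ones, and then a further idea that your proposal does not contain. The first-order combinatorial count fails: for the arc $(v,w)=\bigl((0,\mathbf{0}),(1,\mathbf{0})\bigr)$ and its reverse, the multiset of values ``number of $6$-cycles through $v \adj w \adj x$'' over all extensions $x$ is identical to the corresponding multiset for $(w,v)$ (one extension lying in no $6$-cycle, $k\!-\!1$ in exactly one, and $k\!-\!1$ in exactly $k$), so no contradiction can be extracted at this level. The paper's resolution is a second-order argument: any $\xi$ swapping $v$ and $w$ must map the set $A$ of extensions of $(v,w)$ lying in a unique $6$-cycle onto the corresponding set $C$ for $(w,v)$; following the unique $6$-cycle from an element of $A$ around to its far end at $v$ yields a $2$-arc lying in $k$ distinct $6$-cycles, whereas the far end at $w$ of the unique $6$-cycle from an element of $C$ is a $2$-arc lying in only one $6$-cycle---a contradiction, because $\xi$ preserves these orientation-free counts. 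This tracking of where the unique $6$-cycles land, together with its case-by-case adaptation to the small parameters $n\in\{5,7,9,15,21,31,63\}$ (where girth and cycle counts change), is the substantive content missing from your outline; the ``number-theoretic heart'' you defer to cannot be reached with the invariant as you have set it up.
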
 

By considering the $6$-cycles containing a given $2$-arc, we prove in Section~\ref{sec:main} 
that $B(k,m,n)$ is half-arc-transitive whenever $m > 6$ and $n > 7$, 
and then we adapt this for the other half-arc-transitive cases in Section~\ref{sec:other}. 
In between, we prove arc-transitivity in the cases given in the above theorem 
in Section~\ref{sec:AT}.  
But first we give some additional background about the Bouwer graphs in 
the following section.

\section{Further background} 
\label{further} 

First we give the definition of the Bouwer graph $B(k,m,n)$, for every triple 
$(k,m,n)$ of integers greater than $1$ such that $2^{m} \equiv 1$ mod $n$. 

\smallskip
The vertices of $B(k,m,n)$ are the $k$-tuples $(a,b_2,b_3,\dots,b_k)$ 
with $a \in \Z_m$ and $b_j \in \Z_n$ for $2 \le j \le k$.  
We will sometimes write a given vertex as $(a,{\bf b})$, where ${\bf b} = (b_2,b_3,\dots,b_k)$. 
Any two such vertices are adjacent if they can be written as 
$(a,{\bf b})$ and $(a+1,{\bf c})$ 
where either ${\bf c} = {\bf b}$, 
or ${\bf c} = (c_2,c_3,\dots,c_k)$ differs from ${\bf b} = (b_2,b_3,\dots,b_k)$ in exactly 
one position, say the $(j\!-\!1)$st position, where $c_j = b_j +2^a$. 

\smallskip
Note that the condition $2^{m} \equiv 1$ mod $n$ ensures that $2$ is a unit mod $n$, 
and hence that $n$ is odd.  Also note that the graph is simple. 

\smallskip
In what follows, we let ${\bf e}_j$ be the element of $(\Z_n)^{k-1}$ with $j\hs1$th term 
equal to $1$ and all other terms equal to $0$, for $1 \le j < k$. 
With this notation, we see that the neighbours of a vertex $(a,{\bf b})$ 
are precisely the vertices $(a+1,{\bf b})$, $(a-1,{\bf b})$, $(a+1,{\bf b}+2^{a}{\bf e}_j)$ 
and $(a-1,{\bf b}-2^{a-1}{\bf e}_j)$ for $1 \le j < k$, and in particular, this shows that 
the graph $B(k,m,n)$ is regular of valency $2k$. 

\smallskip
Next, we recall that for all $(k,m,n)$, the graph $B(k,m,n)$ is 
both vertex- and edge-transitive; see \cite[Proposition~1]{Bouwer}.  
Also $B(k,m,n)$ is bipartite if and only if $m$ is even. 
Moreover, it is easy to see that $B(k,m,n)$ has the following three automorphisms:

\medskip \noindent (i)  $\theta$, of order $k-1$, taking 
each vertex $(a,{\bf b}) = (a,b_2,b_3,\dots,b_{k-1},b_k)$ 
to the vertex $(a,{\bf b}') = (a,b_3,b_4,\dots,b_k,b_2)$, obtained by shifting 
its last $k-1$ entries, 

\medskip \noindent (ii)  $\tau$, of order $m$, taking 
each vertex $(a,{\bf b}) = (a,b_2,b_3,\dots,b_{k-1},b_k)$ 
to the vertex $(a,{\bf b}'') = (a+1,2b_2,2b_3,\dots,2b_{k-1},2b_k)$, 
obtained by increasing its first entry $a$ by $1$ and multiplying the others by $2$, 
and 

\medskip \noindent (ii)  $\psi$, of order $2$, taking 
each vertex $(a,{\bf b}) = (a,b_2,b_3,\dots,b_{k-1},b_k)$ 
to the vertex $(a,{\bf b}'') = (a,2^{a}-1-(b_2+b_3+\dots+b_k),b_3,\dots,b_{k-1},b_k)$, 
obtained by replacing its second entry $b_2$ by $2^{a}-1-(b_2+b_3+\dots+b_k)$.

\medskip \noindent 
(Note: in the notation of  \cite{Bouwer}, the automorphism $\psi$ is $T_2 \circ S_2$.) 

\smallskip
The automorphisms $\theta$ and $\psi$ both fix the `zero' vertex $(0,{\bf 0}) = (0,0,\dots,0)$, 
and $\theta$ induces a permutation of its $2k$ neighbours that fixes each of the two 
vertices $(1,{\bf 0}) = (1,0,0,\dots,0)$ and $(-1,{\bf 0}) = (-1,0,0,\dots,0)$  
and induces two $(k\!-\!1)$-cycles on the others, while $\psi$ swaps 
$(1,{\bf 0})$ with $(1,{\bf e}_1) = (1,1,0,\dots,0)$,   
and swaps $(-1,{\bf 0})$ 
with $(-1,-2^{-1}{\bf e}_1) = (-1,-2^{-1},0,\dots,0)$, and fixes all the others. 

It follows that the subgroup generated by $\theta$ and $\psi$ fixes the 
vertex $(0,{\bf 0})$ and has two orbits of length $k$ on its neighbours, 
with one orbit consisting of the vertices of the form $(1,{\bf b})$ 
where ${\bf b} = {\bf 0}$ or ${\bf e}_j$ for some $j$, 
and the other consisting of those of the form $(-1,{\bf c})$ 
where ${\bf c} = {\bf 0}$ or $-2^{-1}{\bf e}_j$ for some $j$.  

By edge-transitivity, the graph $B(k,m,n)$ is arc-transitive if and only if it admits 
an automorphism that interchanges the `zero' vertex $(0,{\bf 0})$ with one of its 
neighbours, in which case the above two orbits of $\langle \theta, \psi \rangle$ 
on the neighbours of $(0,{\bf 0})$ are merged into one under the full automorphism 
group. 
We will use the automorphism $\tau$ in the next section. 

\smallskip
We will also use the following, which is valid in all cases, not just those with 
$m > 6$ and $n > 7$ considered in the next section.

\begin{lemma} 
\label{threearcs} 
Every $3$-arc $\,v_0 \adj v_1 \adj v_2 \adj v_3$ in $X$ with first vertex $v_0 = (0,{\bf 0})$ 
is of one of the following forms, with $r,s,t \in \{1,\dots,k\!-\!1\}$ in each case$\,:$ 
\\[+10pt]
\begin{tabular}{rl} 
\ {\rm (1)} & $(0,{\bf 0}) \adj (1,{\bf 0}) \adj (2,{\bf 0}) \adj (3,{\bf d})$,  
  where $\,{\bf d} = {\bf 0}$ or $\,4{\bf e}_r$,  \\[+4 pt]
\ {\rm (2)} & $(0,{\bf 0}) \adj (1,{\bf 0}) \adj (2,{\bf 0}) \adj (1,-2{\bf e}_r)$,  \\[+4 pt] 
\ {\rm (3)} & $(0,{\bf 0}) \adj (1,{\bf 0}) \adj (2,2{\bf e}_r) \adj (3,2{\bf e}_r\!+\!{\bf d})$,  
  where $\,{\bf d} = {\bf 0}$ or $\,4{\bf e}_s$,  \\[+4 pt]    
\ {\rm (4)} & $(0,{\bf 0}) \adj (1,{\bf 0}) \adj (2,2{\bf e}_r) \adj (1,2{\bf e}_r\!-\!{\bf d})$, 
  where $\,{\bf d} = {\bf 0}$ or $\,2{\bf e}_s$ with $s \ne r$,  \\[+4 pt] 
\ {\rm (5)} & $(0,{\bf 0}) \adj (1,{\bf 0}) \adj (0,-{\bf e}_r) \adj (1,-{\bf e}_r\!+\!{\bf d})$,  
  where $\,{\bf d} = {\bf 0}$ or $\,{\bf e}_s$ with $s \ne r$,  \\[+4 pt]  
\ {\rm (6)} & $(0,{\bf 0}) \adj (1,{\bf 0}) \adj (0,-{\bf e}_r) \adj (-1,-{\bf e}_r\!-\!{\bf d})$,  
  where $\,{\bf d} = {\bf 0}$ or $\,2^{-1}{\bf e}_s$,  \\[+4 pt] 
\ {\rm (7)} & $(0,{\bf 0}) \adj (1,{\bf e}_r) \adj (2,{\bf e}_r) \adj (3,{\bf e}_r\!+\!{\bf d})$,  
  where $\,{\bf d} = {\bf 0}$ or $\,4{\bf e}_s$,  \\[+4 pt] 
\ {\rm (8)} & $(0,{\bf 0}) \adj (1,{\bf e}_r) \adj (2,{\bf e}_r) \adj (1,{\bf e}_r\!-\!2{\bf e}_s)$,  \\[+4 pt] 
\end{tabular}
\begin{tabular}{rl} 
\ {\rm (9)} & $(0,{\bf 0}) \adj (1,{\bf e}_r) \adj (2,{\bf e}_r\!+\!2{\bf e}_s) \adj (3,{\bf e}_r\!+\!2{\bf e}_s\!+\!{\bf d})$,  
  where $\,{\bf d} = {\bf 0}$ or $\,4{\bf e}_t$,  \\[+4 pt]    
{\rm (10)} & $(0,{\bf 0}) \adj (1,{\bf e}_r) \adj (2,{\bf e}_r\!+\!2{\bf e}_s) \adj (1,{\bf e}_r\!+\!2{\bf e}_s\!-\!{\bf d})$,  
   \\ & \quad  where $\,{\bf d} = {\bf 0}$ or $\,2{\bf e}_t$ with $t \ne s$,  \\[+4 pt]    
{\rm (11)} & $(0,{\bf 0}) \adj (1,{\bf e}_r) \adj (0,{\bf e}_r) \adj (1,{\bf e}_r+{\bf e}_s)$,  \\[+4 pt]    
{\rm (12)} & $(0,{\bf 0}) \adj (1,{\bf e}_r) \adj (0,{\bf e}_r) \adj (-1,{\bf e}_r\!-\!{\bf d})$,  
  where $\,{\bf d} = {\bf 0}$ or $\,2^{-1}{\bf e}_s$,  \\[+4 pt]    
{\rm (13)} & $(0,{\bf 0}) \adj (1,{\bf e}_r) \adj (0,{\bf e}_r-{\bf e}_s) \adj (1,{\bf e}_r-{\bf e}_s\!+\!{\bf d})$,   
    where $s \ne r$, \\ & \quad  and $\,{\bf d} = {\bf 0}$ or $\,{\bf e}_t$ with $t \ne s$,  \\[+4 pt]    
{\rm (14)} & $(0,{\bf 0}) \adj (1,{\bf e}_r) \adj (0,{\bf e}_r-{\bf e}_s) \adj (-1,{\bf e}_r-{\bf e}_s\!-\!{\bf d})$,   
    where $s \ne r$, \\ & \quad  and $\,{\bf d} = {\bf 0}$ or $\,2^{-1}{\bf e}_t$,  \\[+4 pt]    
{\rm (15)} & $(0,{\bf 0}) \adj (-1,{\bf 0}) \adj (0,2^{-1}{\bf e}_r) \adj (1,2^{-1}{\bf e}_r\!+\!{\bf d})$,  
  where $\,{\bf d} = {\bf 0}$ or $\,{\bf e}_s$,  \\[+4 pt] 
{\rm (16)} & $(0,{\bf 0}) \adj (-1,{\bf 0}) \adj (0,2^{-1}{\bf e}_r) \adj (-1,2^{-1}{\bf e}_r\!-\!{\bf d})$,  
   \\ & \quad  where $\,{\bf d} = {\bf 0}$ or $\,2^{-1}{\bf e}_s$ with $s \ne r$,  \\[+4 pt] 
{\rm (17)} & $(0,{\bf 0}) \adj (-1,{\bf 0}) \adj (-2,{\bf 0}) \adj (-1,2^{-2}{\bf e}_r)$,  \\[+4 pt] 
{\rm (18)} & $(0,{\bf 0}) \adj (-1,{\bf 0}) \adj (-2,{\bf 0}) \adj (-3,-{\bf d})$,  
  where $\,{\bf d} = {\bf 0}$ or $\,2^{-3}{\bf e}_r$,  \\[+4 pt] 
{\rm (19)} & $(0,{\bf 0}) \adj (-1,{\bf 0}) \adj (-2,-2^{-2}{\bf e}_r) \adj (-1,-2^{-2}{\bf e}_r\!+\!{\bf d})$,  
   \\ & \quad  where $\,{\bf d} = {\bf 0}$ or $\,2^{-2}{\bf e}_s$ with $s \ne r$,  \\[+4 pt] 
{\rm (20)} & $(0,{\bf 0}) \adj (-1,{\bf 0}) \adj (-2,-2^{-2}{\bf e}_r) \adj (-3,-2^{-2}{\bf e}_r\!-\!{\bf d})$,  
   \\ & \quad  where $\,{\bf d} = {\bf 0}$ or $\,2^{-3}{\bf e}_s$,  \\[+4 pt] 
{\rm (21)} & $(0,{\bf 0}) \adj (-1,-2^{-1}{\bf e}_r) \adj (0,-2^{-1}{\bf e}_r) \adj (1,-2^{-1}{\bf e}_r\!+\!{\bf d})$, 
   \\ & \quad  where $\,{\bf d} = {\bf 0}$ or $\,{\bf e}_s$,  \\[+4 pt] 
{\rm (22)} & $(0,{\bf 0}) \adj (-1,-2^{-1}{\bf e}_r) \adj (0,-2^{-1}{\bf e}_r) \adj (-1,-2^{-1}{\bf e}_r\!-\!2^{-1}{\bf e}_s)$,  \\[+4 pt]  
{\rm (23)} & $(0,{\bf 0}) \adj (-1,-2^{-1}{\bf e}_r) \adj (0,-2^{-1}{\bf e}_r\!+\!2^{-1}{\bf e}_s) \adj (1,-2^{-1}{\bf e}_r\!+\!2^{-1}{\bf e}_s\!+\!{\bf d})$,  
   \\ & \quad  where $s \ne r$, and $\,{\bf d} = {\bf 0}$ or $\,{\bf e}_t$,  \\[+4 pt]    
{\rm (24)} & $(0,{\bf 0}) \adj (-1,-2^{-1}{\bf e}_r) \adj (0,-2^{-1}{\bf e}_r\!+\!2^{-1}{\bf e}_s) \adj (-1,-2^{-1}{\bf e}_r\!+\!2^{-1}{\bf e}_s\!-\!{\bf d})$, 
   \\ & \quad  where $s \ne r$, and $\,{\bf d} = {\bf 0}$ or $\,2^{-1}{\bf e}_t$ with $t \ne s$,  \\[+4 pt] 
{\rm (25)} & $(0,{\bf 0}) \adj (-1,-2^{-1}{\bf e}_r) \adj (-2,-2^{-1}{\bf e}_r) \adj (-1,-2^{-1}{\bf e}_r\!+\!2^{-2}{\bf e}_s)$,  \\[+4 pt] 
{\rm (26)} & $(0,{\bf 0}) \adj (-1,-2^{-1}{\bf e}_r) \adj (-2,-2^{-1}{\bf e}_r) \adj (-3,-2^{-1}{\bf e}_r\!-\!{\bf d})$, 
   \\ & \quad  where $\,{\bf d} = {\bf 0}$ or $\,2^{-3}{\bf e}_s$,  \\[+4 pt] 
{\rm (27)} & $(0,{\bf 0}) \adj (-1,-2^{-1}{\bf e}_r) \adj (-2,-2^{-1}{\bf e}_r\!-\!2^{-2}{\bf e}_s) \adj (-1,-2^{-1}{\bf e}_r\!-\!2^{-2}{\bf e}_s\!+\!{\bf d})$,  
   \\ & \quad  where $\,{\bf d} = {\bf 0}$ or $\,2^{-2}{\bf e}_t$ with $t \ne s$,  \\[+4 pt] 
{\rm (28)} & $(0,{\bf 0}) \adj (-1,-2^{-1}{\bf e}_r) \adj (-2,-2^{-1}{\bf e}_r\!-\!2^{-2}{\bf e}_s) \adj (-3,-2^{-1}{\bf e}_r\!-\!2^{-2}{\bf e}_s\!-\!{\bf d})$,  
   \\ & \quad  where $\,{\bf d} = {\bf 0}$ or $\,2^{-3}{\bf e}_t$.  \\ 
 \end{tabular}
\end{lemma}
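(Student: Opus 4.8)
The plan is to construct every such $3$-arc $v_0 \adj v_1 \adj v_2 \adj v_3$ one edge at a time, using at each step the explicit list of the $2k$ neighbours of a vertex $(a,{\bf b})$ recalled above --- namely the two \emph{plain} neighbours $(a\!+\!1,{\bf b})$ and $(a\!-\!1,{\bf b})$, and the $2(k\!-\!1)$ \emph{shifted} neighbours $(a\!+\!1,{\bf b}+2^{a}{\bf e}_j)$ and $(a\!-\!1,{\bf b}-2^{a-1}{\bf e}_j)$ for $1 \le j < k$ --- and discarding at each internal vertex the unique neighbour that would send the walk back where it came from.

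First I would record the neighbours of $v_0 = (0,{\bf 0})$, taking $a=0$ so that $2^{a}=1$ and $2^{a-1}=2^{-1}$: these are $(1,{\bf 0})$, $(-1,{\bf 0})$, the vertices $(1,{\bf e}_r)$, and the vertices $(-1,-2^{-1}{\bf e}_r)$ for $1 \le r < k$. These are exactly the four possible shapes of $v_1$, and they correspond to the four blocks of the statement: cases $(1)$--$(6)$, $(7)$--$(14)$, $(15)$--$(20)$ and $(21)$--$(28)$ respectively.

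For each such $v_1$ I would then list its neighbours, being careful to substitute the correct first coordinate into the exponent (so that the relevant power of $2$ is $2^{1}=2$, or $2^{-1}$, $2^{-2}$, $2^{-3}$ as appropriate) and to delete the neighbour equal to $v_0$; this yields the admissible choices of $v_2$, which split each block into its rows. For instance, when $v_1 = (1,{\bf 0})$ the surviving choices are $(2,{\bf 0})$, $(2,2{\bf e}_r)$ and $(0,-{\bf e}_r)$ --- the down-plain neighbour $(0,{\bf 0})$ being $v_0$ --- and these give the row-pairs $(1),(2)$; $(3),(4)$; $(5),(6)$. Running the same computation once more at $v_2$, again throwing away the neighbour equal to $v_1$, produces the possible $v_3$; within a single row the plain and shifted choices of $v_3$ are bundled together by the auxiliary vector ${\bf d}$, which is ${\bf 0}$ or an appropriate multiple of ${\bf e}_s$ or ${\bf e}_t$. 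The side-conditions $s \ne r$, $t \ne s$, and so on enter exactly and only when a shifted step with the forbidden index would coincide with the vertex just visited: for example in case $(4)$ the neighbour $(1,2{\bf e}_r-2{\bf e}_s)$ of $v_2 = (2,2{\bf e}_r)$ equals $v_1 = (1,{\bf 0})$ precisely when $s=r$, forcing the restriction $s \ne r$.

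There is no conceptual difficulty here: the argument is a finite, mechanical enumeration that terminates in the twenty-eight listed forms, with neither omission nor repetition. The main obstacle is purely one of bookkeeping accuracy --- tracking the first coordinate at each vertex so as to use the right power of $2$, consistently removing the single backtracking neighbour at each of the two interior vertices, and attaching the index restrictions at exactly those places where a shifted step would reproduce the preceding vertex (while noting that no such restriction is needed where the shifted vertex is genuinely new, as in cases $(7)$ and $(8)$). Organising the work as four groups, each subdivided according to the value of $v_2$, keeps all of this under control.
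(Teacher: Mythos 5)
Your proposal is correct and takes essentially the same approach as the paper: the paper's entire proof is the single line ``This follows directly from the definition of $X = B(k,m,n)$,'' and your edge-by-edge enumeration of the $2k$ neighbours at each step, discarding the unique backtracking neighbour at each interior vertex and attaching index restrictions exactly where a shifted step would reproduce the preceding vertex, is precisely that direct verification written out. The only slip is the word ``respectively'' in your second paragraph, which misassigns two of the blocks --- $v_1 = (1,{\bf e}_r)$ gives cases (7)--(14) and $v_1 = (-1,{\bf 0})$ gives cases (15)--(20), not the other way around.
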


\begin{proof}
This follows directly from the definition of $X = B(k,m,n)$. 
\end{proof} 

Note that the 28 cases given in Lemma~\ref{threearcs} fall naturally into $14$ pairs, 
with each pair determined by the form of the initial $2$-arc $\,v_0 \adj v_1 \adj v_2$. 

\smallskip
Also is it easy to see that 
the number of $3$-arcs in each case is \\[-4pt] 
$$\begin{cases}
\ k & \hbox{in cases 1 and 18,} \\[-1 pt]  
\ k\!-\!1 & \hbox{in cases 2 and 17,} \\[-1 pt] 
\ k(k\!-\!1) & \hbox{in cases 3, 6, 7, 12, 15, 20, 21 and 26,} \\[-1 pt] 
\ (k\!-\!1)^2 & \hbox{in cases 4, 5, 8, 11, 16, 19, 22 and 25,} \\[-1 pt]
\ k(k\!-\!1)^2 & \hbox{in cases 9 and 28,} \\[-1 pt]
\ (k\!-\!1)^3 & \hbox{in cases 10 and 27,} \\[-1 pt]
\ (k\!-\!1)^{2}(k\!-\!2) & \hbox{in cases 13 and 24,} \\[-1 pt]
\ k(k\!-\!1)(k\!-\!2) & \hbox{in cases 14 and 23,} 
\end{cases}
$$
and the total of all these numbers is $2k(2k\!-\!1)^2$, as expected.

\section{The main approach} 
\label{sec:main}

Let $k$ be any integer greater than $1$, and suppose that $m > 6$ and $n > 7$.   
We will prove that in every such case, the graph $X = B(k,m,n)$ is not arc-transitive, 
and is therefore half-arc-transitive. 

\smallskip
We do this simply by considering the ways in which a given $2$-arc or $3$-arc  
lies in a cycle of length $6$ in $X$. 
(For any positive integer $s$, an $s$-arc in a simple graph is a walk 
$\,v_0 \adj v_1 \adj v_2 \adj \dots \adj v_s$ of length $s$ in which any three consecutive vertices are distinct.)
By vertex-transitivity, we can consider what happens locally around the vertex $(0,{\bf 0})$. 

\begin{lemma} 
\label{girth} 
The girth of $X$ is $6$. 
\end{lemma}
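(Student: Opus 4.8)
The plan is to prove the two inequalities $\mathrm{girth}(X) \ge 6$ and $\mathrm{girth}(X) \le 6$ separately. By vertex-transitivity it suffices to analyse cycles through the vertex $(0,{\bf 0})$, and throughout I would use the elementary observation that every edge of $X$ changes the first coordinate of a vertex by exactly $\pm 1$.

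For the lower bound, I would first dispose of the odd short cycles by projecting onto the first coordinate. Around any cycle of length $\ell$ the first coordinate returns to its starting value, so the signed total of the $\pm 1$ steps is $\equiv 0 \pmod m$; as this total has the same parity as $\ell$ and absolute value at most $\ell$, every cycle with $\ell < m$ must in fact be even. Since $m > 6$, this excludes $3$-cycles and $5$-cycles at one stroke, and a simple graph has no $2$-cycles, so it remains only to rule out $4$-cycles.

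The heart of the argument is that $X$ has no $4$-cycle. A $4$-cycle through $(0,{\bf 0})$ is precisely a $3$-arc $(0,{\bf 0}) \adj v_1 \adj v_2 \adj v_3$ with $v_3$ adjacent to $(0,{\bf 0})$ and $v_3 \ne v_1$, so I would run through Lemma~\ref{threearcs}, retaining only those cases whose terminal vertex $v_3$ lies at level $\pm 1$ (the levels of the neighbours of $(0,{\bf 0})$). In each such case, demanding that $v_3$ be one of the neighbours $(1,{\bf 0})$, $(1,{\bf e}_j)$, $(-1,{\bf 0})$ or $(-1,-2^{-1}{\bf e}_j)$ forces the nonzero coordinates of $v_3$ to collapse, and a short computation shows that this always forces either $v_3 = v_1$ (a degenerate closure) or a congruence of the form $n \mid c$ with $c \in \{1,2,3\}$ fixed, for instance $-2 \equiv 1$, $2 \equiv 1$, $3 \equiv 0$, or $\pm 1 \equiv 0 \pmod n$. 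Since $n$ is odd and $n > 7$, none of these can hold, so no $4$-cycle exists and $\mathrm{girth}(X) \ge 6$. I expect this case-by-case check to be the main obstacle: although each individual case is routine, there are about twenty terminal vertices to test, and one must take care to discard the degenerate fits that merely revisit $v_1$ as well as those involving two or three surviving coordinates.

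Finally, to see that the girth is exactly $6$ I would exhibit a single $6$-cycle. Taking $r = 1$, the $3$-arc of case~(4) with ${\bf d} = {\bf 0}$, namely $(0,{\bf 0}) \adj (1,{\bf 0}) \adj (2,2{\bf e}_1) \adj (1,2{\bf e}_1)$, and the $3$-arc of case~(11) with $s = r = 1$, namely $(0,{\bf 0}) \adj (1,{\bf e}_1) \adj (0,{\bf e}_1) \adj (1,2{\bf e}_1)$, share the terminal vertex $(1,2{\bf e}_1)$ but are otherwise disjoint, so their union is a closed walk of length $6$. Its six vertices $(0,{\bf 0}),(1,{\bf 0}),(2,2{\bf e}_1),(1,2{\bf e}_1),(0,{\bf e}_1),(1,{\bf e}_1)$ are pairwise distinct, since the three level-$1$ vertices have second coordinates $0,1,2$, which are distinct in $\Z_n$ because $n > 2$. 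Hence this walk is a genuine $6$-cycle, and combining this with the lower bound gives $\mathrm{girth}(X) = 6$.
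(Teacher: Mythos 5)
Your proposal is correct, and it shares the paper's skeleton: everything is read off from the classification of $3$-arcs in Lemma~\ref{threearcs}, and the girth-attaining cycle you exhibit is exactly the paper's (the paper even notes, in the proof of Lemma~\ref{sixcycles}, that this $6$-cycle is a type-$4$ $3$-arc followed by a reversed type-$11$ $3$-arc). Where you differ is in how the short cycles are excluded. The paper makes three observations about the list: $v_3$ never equals $v_0$ (no $3$-cycles), $v_1$ is uniquely determined by $v_2$ (no $4$-cycles, since a $4$-cycle amounts to two distinct $2$-arcs with the same endpoints), and no possible $v_2$ coincides with a possible $v_3$ (no $5$-cycles). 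You replace the first and third of these by the projection-to-$\Z_m$ parity argument: around any cycle of length $\ell < m$ the $\pm 1$ increments must sum to $0$, forcing $\ell$ even, so $m > 6$ kills lengths $3$ and $5$ outright. That is more elementary, involves no case analysis, and makes visible exactly where the hypothesis $m > 6$ enters. For $4$-cycles you inspect the list from the other end, checking that no terminal vertex $v_3$ at level $\pm 1$ is a neighbour of $(0,{\bf 0})$; this is a somewhat longer check (nineteen cases) than the paper's pairwise comparison of the possible middle vertices $v_2$, but it is of the same routine character, and your diagnosis of the failure modes is accurate: every potential coincidence requires either $n \mid 1$, $n \mid 2$ or $n \mid 3$, or a vector with two or three nonzero entries to equal ${\bf 0}$, some ${\bf e}_j$, or some $-2^{-1}{\bf e}_j$ --- all impossible for odd $n > 7$ (indeed $n > 3$ suffices for this step). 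One small simplification: you need not worry about the ``degenerate closures'' $v_3 = v_1$, since a $3$-arc has any three consecutive vertices distinct by definition, so the list in Lemma~\ref{threearcs} already excludes them (that is what restrictions such as $s \ne r$ in its cases encode). In sum, both routes are sound; the paper's is more compact, while yours isolates more cleanly which hypothesis on $m$ and on $n$ is responsible for which exclusion.
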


\begin{proof}
First, $X = B(k,m,n)$ is simple, by definition. 
Also there are no cycles of length $3$, $4$ or $5$ in $X$, since in the list of cases 
for a $3$-arc $\,v_0 \adj v_1 \adj v_2 \adj v_3$ in $X$ with first vertex $v_0 = (0,{\bf 0})$  
given by Lemma~\ref{threearcs}, the vertex $v_3$ is never equal to $v_0$, 
the vertex $v_1$ is uniquely determined by $v_2$, 
and every possibility for $v_2$ is different from every possibility for $v_3$. 
On the other hand, there are certainly some cycles of length $6$ in $X$, such as 
$(0,{\bf 0}) \adj (1,{\bf 0}) \adj (2,2{\bf e}_k) \adj (1, 
 2{\bf e}_k) \adj (0,{\bf e}_k) \adj (1,{\bf e}_k) \adj (0,{\bf 0})$. 
\end{proof} 

Next, we can 
find all $6$-cycles 
based at the vertex $v_0 = (0,{\bf 0})$ in $X$. 

\begin{lemma} 
\label{sixcycles}
Up to reversal, every $6$-cycle based at the vertex $v_0 = (0,{\bf 0})$ 
has exactly one of the forms below, with $r$, $s,$ $t$ all different 
when they appear$\,:$ 
\\[+5pt]
$\bullet$ \ $(0,{\bf 0}) \adj (1,{\bf 0}) \adj (2,2{\bf e}_r) \adj (1,2{\bf e}_r) \adj (0,{\bf e}_r) \adj (1,{\bf e}_r) \adj (0,{\bf 0})$, \\[+3 pt] 
$\bullet$ \ $(0,{\bf 0}) \adj (1,{\bf 0}) \adj (0,-{\bf e}_r) \adj (1,-{\bf e}_r) \adj (2,{\bf e}_r) \adj (1,{\bf e}_r) \adj (0,{\bf 0})$, \\[+3 pt] 
$\bullet$ \ $(0,{\bf 0}) \adj (1,{\bf 0}) \adj (0,-{\bf e}_r) \adj (1,{\bf e}_s\!-\!{\bf e}_r) \adj (0,{\bf e}_s\!-\!{\bf e}_r) \adj (1,{\bf e}_s) \adj (0,{\bf 0})$,  \\[+3 pt] 
$\bullet$ \ $(0,{\bf 0}) \adj (1,{\bf 0}) \adj (0,-{\bf e}_r) \adj (-1,-{\bf e}_r) \adj (0,-2^{-1}{\bf e}_r) \adj (-1,-2^{-1}{\bf e}_r) \adj (0,{\bf 0})$, \\[+3 pt] 
$\bullet$ \  $(0,{\bf 0}) \adj (1,{\bf e}_r) \adj (2,2{\bf e}_s\!+\!{\bf e}_r) \adj (1,2{\bf e}_s\!-\!{\bf e}_r) \adj (0,{\bf e}_s\!-\!{\bf e}_r) \adj (1,{\bf e}_s) \adj (0,{\bf 0})$,  \\[+3 pt] 
$\bullet$ \ $(0,{\bf 0}) \adj (1,{\bf e}_r) \adj (0,{\bf e}_r) \adj (1,{\bf e}_s\!+\!{\bf e}_r) \adj (0,{\bf e}_s) \adj (1,{\bf e}_s) \adj (0,{\bf 0})$, \\[+3 pt] 
$\bullet$ \  $(0,{\bf 0}) \adj (1,{\bf e}_r) \adj (0,{\bf e}_r) \adj (-1,2^{-1}{\bf e}_r) \adj (0,2^{-1}{\bf e}_r) \adj (-1,{\bf 0}) \adj (0,{\bf 0})$,  \\[+3 pt] 
$\bullet$ \  $(0,{\bf 0}) \adj (1,{\bf e}_r) \adj (0,{\bf e}_r\!-\!{\bf e}_s) \adj (1,{\bf e}_r\!-\!{\bf e}_s\!+\!{\bf e}_t) \adj (0,{\bf e}_t\!-\!{\bf e}_s) \adj (1,{\bf e}_t) \adj (0,{\bf 0})$,  \\[+3 pt] 
$\bullet$ \  $(0,{\bf 0}) \adj (1,{\bf e}_r) \adj (0,{\bf e}_r\!-\!{\bf e}_s) \adj (-1,2^{-1}{\bf e}_r\!-\!{\bf e}_s) \adj (0,2^{-1}{\bf e}_r\!-\!2^{-1}{\bf e}_s)$ \\ ${}$ \qquad $\adj (-1,-\!2^{-1}{\bf e}_s) \adj (0,{\bf 0})$,  \\[+3 pt] 
$\bullet$ \  $(0,{\bf 0}) \adj (-1,{\bf 0}) \adj (0,2^{-1}{\bf e}_r) \adj (1,2^{-1}{\bf e}_r) \adj (0,-2^{-1}{\bf e}_r) \adj (-1,-2^{-1}{\bf e}_r) $ \\ ${}$ \qquad $\adj (0,{\bf 0})$,  \\[+3 pt] 
$\bullet$ \  $(0,{\bf 0}) \adj (-1,{\bf 0}) \adj (0,2^{-1}{\bf e}_r) \adj (-1,2^{-1}{\bf e}_r\!-\!2^{-1}{\bf e}_s) \adj (0,2^{-1}{\bf e}_r\!-\!2^{-1}{\bf e}_s)$ \\ ${}$ \qquad $\adj (-1,-\!2^{-1}{\bf e}_s) \adj (0,{\bf 0})$,  \\[+3 pt] 
$\bullet$ \  $(0,{\bf 0}) \adj (-1,{\bf 0}) \adj (-2,-2^{-2}{\bf e}_r) \adj (-1,-2^{-2}{\bf e}_r) \adj (-2,-2^{-1}{\bf e}_r) \adj (-1,-2^{-1}{\bf e}_r) $ \\ ${}$ \qquad $\adj (0,{\bf 0})$,  \\[+3 pt] 
$\bullet$ \  $(0,{\bf 0}) \adj (-1,-2^{-1}{\bf e}_r) \adj (0,-2^{-1}{\bf e}_r) \adj (-1,-2^{-1}{\bf e}_r\!-\!2^{-1}{\bf e}_s) \adj (0,-2^{-1}{\bf e}_s)$ \\ ${}$ \qquad $\adj (-1,-2^{-1}{\bf e}_s) \adj (0,{\bf 0})$,  \\[+3 pt] 
$\bullet$ \  $(0,{\bf 0}) \adj (-1,-2^{-1}{\bf e}_r) \adj (0,-2^{-1}{\bf e}_r\!+\!2^{-1}{\bf e}_s) \adj (1,2^{-1}{\bf e}_r\!+\!2^{-1}{\bf e}_s)$ \\ ${}$ \qquad $\adj (0,2^{-1}{\bf e}_r\!-\!2^{-1}{\bf e}_s) \adj (-1,-2^{-1}{\bf e}_s) \adj (0,{\bf 0})$,  \\[+3 pt] 
$\bullet$ \  $(0,{\bf 0}) \adj (-1,-2^{-1}{\bf e}_r) \adj (0,-2^{-1}{\bf e}_r\!+\!2^{-1}{\bf e}_s) \adj (-1,-2^{-1}{\bf e}_r\!+\!2^{-1}{\bf e}_s\!-\!2^{-1}{\bf e}_t)$ \\ ${}$ \qquad $\adj (0,2^{-1}{\bf e}_s\!-\!2^{-1}{\bf e}_t) \adj (-1,-2^{-1}{\bf e}_t) \adj (0,{\bf 0})$, or  \\[+3 pt] 
$\bullet$ \  $(0,{\bf 0}) \adj (-1,-2^{-1}{\bf e}_r) \adj (-2,-2^{-1}{\bf e}_r\!-\!2^{-2}{\bf e}_s) \adj (-1,-2^{-2}{\bf e}_r\!-\!2^{-2}{\bf e}_s)$ \\ ${}$ \qquad $\adj (-2,-2^{-2}{\bf e}_r\!-\!2^{-1}{\bf e}_s) \adj (-1,-2^{-1}{\bf e}_s) \adj (0,{\bf 0})$.  
\end{lemma}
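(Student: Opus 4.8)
The plan is to establish a clean bijection between the $6$-cycles through $v_0 = (0,{\bf 0})$, taken up to reversal, and the unordered pairs of distinct $3$-arcs from $v_0$ that share the same terminal vertex $v_3$, and then to extract all such pairs from the list in Lemma~\ref{threearcs}. Any $6$-cycle $v_0 \adj v_1 \adj v_2 \adj v_3 \adj v_4 \adj v_5 \adj v_0$ splits into the two $3$-arcs $v_0 \adj v_1 \adj v_2 \adj v_3$ and $v_0 \adj v_5 \adj v_4 \adj v_3$. Conversely, I would show that any two \emph{distinct} $3$-arcs from $v_0$ to a common vertex $v_3$ automatically close up into a genuine $6$-cycle: since the girth is $6$ (Lemma~\ref{girth}), a shared interior vertex $v_1$ or $v_2$ would create a $4$-cycle, while a ``crossing'' coincidence $v_1 = v_4$ or $v_2 = v_5$ would force $v_1 \adj v_3$ or $v_2 \adj v_0$ and hence a triangle, each contradicting the girth; so all six vertices are distinct. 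Reversing the cycle merely interchanges the two $3$-arcs, which is exactly why the enumeration is ``up to reversal''. Thus counting $6$-cycles reduces to detecting coincidences of terminal vertices in Lemma~\ref{threearcs}.

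Next I would bring in two arithmetic constraints. Each step changes the first coordinate by $\pm 1$, and a closed $6$-walk returns to $a=0$; since $m>6$ forbids wrap-around in $\Z_m$, the first coordinate $a_3$ of $v_3$ lies in $\{-3,-1,1,3\}$, and two $3$-arcs can meet only when their values of $a_3$ agree. This partitions the $28$ cases into four groups. For $a_3 = \pm 3$ (the monotone cases $1,3,7,9$ and $18,20,26,28$) I would check that the terminal ${\bf b}$-parts are pairwise distinct for distinct parameters, using $n>7$ to exclude collapses such as $1+4 \equiv 0$ or $2 \equiv 4$; hence these groups contribute no $6$-cycle, and all the genuine cycles have $a_3 = \pm 1$.

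For $a_3 = 1$ (the ten cases $2,4,5,8,10,11,13,15,21,23$), and analogously for $a_3 = -1$, I would tabulate $v_3 = (1,{\bf w})$ as an explicit $\Z_n$-combination of the ${\bf e}_j$ for each case and each choice of parameters, and then solve for every equality ${\bf w} = {\bf w}'$ across distinct arcs. Each solution yields one listed $6$-cycle: for example ${\bf w} = 2{\bf e}_r$ is attained only by case~$4$ (with ${\bf d} = {\bf 0}$) and case~$11$ (with $s=r$), and splicing these two arcs gives the first cycle in the list; similarly the cycle with three indices comes from matching case~$13$ against itself with the roles of two indices exchanged. The hypothesis $n>7$ is essential here, guaranteeing that the coefficients $1,2,4,2^{-1},2^{-2},2^{-3}$ and the short sums and differences occurring in the table are all distinct and nonzero, so that no spurious coincidence arises; for small $n$ these collapses \emph{do} occur and are precisely what yield the exceptional arc-transitive graphs in Theorem~\ref{thm:main}. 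The per-case index restrictions, together with the requirement that the two matched arcs be distinct, then translate directly into the stated conditions that $r,s,t$ be pairwise different when they appear, and discarding reversals records each form exactly once.

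The main obstacle I anticipate is not conceptual but the volume of bookkeeping in the $a_3 = \pm 1$ matching: twenty arc-types, each with two or three parameter choices, must be compared exhaustively, and every candidate equality resolved in $\Z_n$, with the danger of either overlooking a genuine coincidence or admitting a spurious one. The delicate part is therefore the disciplined use of $n>7$ to certify that the only solutions are the sixteen listed. As an independent safeguard I would finally cross-check the total: summing over the matched pairs must reproduce a count consistent with the $3$-arc tallies already recorded (whose grand total is $2k(2k\!-\!1)^2$), which provides a reassuring arithmetic confirmation that no cycle has been missed or double-counted.
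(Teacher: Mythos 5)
Your proposal is correct and takes essentially the same approach as the paper: the paper's proof (left largely as an exercise) hints precisely that each $6$-cycle is the concatenation of a $3$-arc of type $i$ from Lemma~\ref{threearcs} with the reverse of a $3$-arc of type $j$ sharing the same terminal vertex, with uniqueness following from the assumptions on $m$ and $n$. Your plan in fact supplies details the paper omits, namely the girth-based argument that two distinct $3$-arcs with a common endpoint close up into a genuine $6$-cycle, and the organization of the matching by the terminal first coordinate $a_3 \in \{\pm 1, \pm 3\}$ using $m > 6$.
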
 

\begin{proof}
This is left as an exercise for the reader. 
It may be helpful to note that a $6$-cycle of the first form is obtainable as a $3$-arc of type 4 
with final vertex $2{\bf e}_r$ followed by the reverse of a $3$-arc of type 11 
with the same final vertex.  
The $6$-cycles of the other $15$ forms are similarly obtainable as the concatenation 
of a $3$-arc of type $i$  with the reverse of a $3$-arc of type $j$, 
for $(i,j) = (5,8)$, $(5,13)$, $(6,22)$, $(10,13)$, $(11,11)$, $(12,16)$, $(13,13)$, $(14,24)$, 
$(15,21)$, $(16,24)$, $(19,25)$, $(22,22)$, $(23,23)$, $(24,24)$ and $(27,27)$, respectively. 
Uniqueness follows from the assumptions about $m$ and $n$. 
\end{proof} 

\smallskip
\begin{corollary} 
\label{count6cycles} 
The number of different $6$-cycles in $X$ that contain a given $2$-arc 
$\,v_0 \adj v_1 \adj v_2$ with first vertex $v_0 = (0,{\bf 0})$ is always 
$0$, $1$ or $k$. 
More precisely, this number is 
\\[-12pt] 
\begin{center} 
\begin{tabular}{cl} 
${}$\hskip -18pt $0$ 
 & \hskip -8pt 
  for the $2$-arcs $(0,{\bf 0}) \adj (1,{\bf 0}) \adj (2,{\bf 0})$ 
  and  $(0,{\bf 0}) \adj (-1,{\bf 0}) \adj (-2,{\bf 0})$, \\ 
 & and those of the form $(0,{\bf 0}) \adj (1,{\bf e}_r) \adj (2,3{\bf e}_r)$,  \\
 &  and those of the form $(0,{\bf 0}) \adj (-1,-2^{-1}{\bf e}_r) \adj (-2,-(2^{-1}\!+\!2^{-2}){\bf e}_r)$, \\[+6pt]  
${}$\hskip -18pt $1$ 
 & \hskip -8pt 
  for the $2$-arcs of the form $(0,{\bf 0}) \adj (1,{\bf 0}) \adj (2,2{\bf e}_r)$,  \\
 &  and those of the form $(0,{\bf 0}) \adj (1,{\bf e}_r) \adj (2,{\bf e}_r)$,  \\
 & and those of the form $(0,{\bf 0}) \adj (1,{\bf e}_r) \adj (2,{\bf e}_r\!+\!2{\bf e}_s)$, when $k > 2$, \\
 & and those of the form $(0,{\bf 0}) \adj (-1,{\bf 0}) \adj (-2,-2^{-2}{\bf e}_r)$,  \\
 & and those of the form $(0,{\bf 0}) \adj (-1,-2^{-1}{\bf e}_r) \adj (-2,-2^{-1}{\bf e}_r)$, \\
 & and those of the form $(0,{\bf 0}) \adj (-1,-2^{-1}{\bf e}_r) \adj (-2,-2^{-1}{\bf e}_r\!-\!2^{-2}{\bf e}_s)$,  
  \\ & \quad when $k > 2$, \ and 
\end{tabular}
\begin{tabular}{cl} 
${}$\hskip -18pt $k$ 
 & \hskip -8pt 
  for the $2$-arcs of the form $(0,{\bf 0}) \adj (1,{\bf 0}) \adj (0,-{\bf e}_r)$,  \\
 &  and those of the form $(0,{\bf 0}) \adj (1,{\bf e}_r) \adj (0,{\bf e}_r)$,  \\
 & and those of the form $(0,{\bf 0}) \adj (1,{\bf e}_r) \adj (0,{\bf e}_r\!-\!{\bf e}_s)$,  when $k > 2$,  \\
 & and those of the form $(0,{\bf 0}) \adj (-1,{\bf 0}) \adj (0,2^{-1}{\bf e}_r)$,  \\
 & and those of the form $(0,{\bf 0}) \adj (-1,-2^{-1}{\bf e}_r) \adj (0,-2^{-1}{\bf e}_r)$, \\
 & and those of the form $(0,{\bf 0}) \adj (-1,-2^{-1}{\bf e}_r) \adj (0,-2^{-1}{\bf e}_r\!+\!2^{-1}{\bf e}_s)$,  
  \\ & \quad  when $k > 2$. \\[-3pt]  
\end{tabular}
\end{center}
In particular, every $2$-arc of the form $(0,{\bf 0}) \adj (1,{\bf 0}) \adj (2,2{\bf e}_r)$ 
lies in just one $6$-cycle, 
namely $(0,{\bf 0}) \adj (1,{\bf 0}) \adj (2,2{\bf e}_r) \adj (1,2{\bf e}_r) \adj (0,{\bf e}_r) \adj (1,{\bf e}_r) \adj (0,{\bf 0})$,
while every $2$-arc of the form  $(0,{\bf 0}) \adj (1,{\bf e}_r) \adj (0,{\bf e}_r)$ lies in $k$ $6$-cycles. 
\end{corollary}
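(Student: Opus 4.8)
The plan is to read the answer straight off the explicit list in Lemma~\ref{sixcycles}. The basic observation is that an undirected $6$-cycle $C$ through $v_0 = (0,{\bf 0})$, written $v_0 \adj v_1 \adj v_2 \adj v_3 \adj v_4 \adj v_5 \adj v_0$, meets $v_0$ in exactly two $2$-arcs, namely the ``forward'' one $v_0 \adj v_1 \adj v_2$ and the ``backward'' one $v_0 \adj v_5 \adj v_4$, and these are distinct because the six vertices of $C$ are distinct. Hence, for a fixed $2$-arc $\alpha$ with first vertex $v_0$, the number of $6$-cycles containing $\alpha$ is exactly the number of cycles $C$ in the list for which $\alpha$ is the forward or the backward $2$-arc. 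Since Lemma~\ref{sixcycles} supplies every such $C$ once up to reversal (and irredundantly, by the hypotheses $m > 6$ and $n > 7$), I can evaluate this number by passing through the $16$ forms one at a time.

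First I would tabulate, for each of the $16$ forms, both its forward and its backward $2$-arc together with the ranges of the free indices $r$, $s$, $t$, and then sort these into the types appearing in the statement. The ``monotone'' $2$-arcs, those whose vertices $v_0,v_1,v_2$ have first coordinates $0,1,2$ or $0,-1,-2$, are settled by direct inspection: the ones that bend at most at a single repeated index, such as $(0,{\bf 0}) \adj (1,{\bf 0}) \adj (2,{\bf 0})$ or $(0,{\bf 0}) \adj (1,{\bf e}_r) \adj (2,3{\bf e}_r)$, occur as no form's $2$-arc and so have count $0$, while the remaining monotone ones, such as $(0,{\bf 0}) \adj (1,{\bf 0}) \adj (2,2{\bf e}_r)$, occur as the $2$-arc of exactly one form with all indices pinned down and so have count $1$. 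In particular $(0,{\bf 0}) \adj (1,{\bf 0}) \adj (2,2{\bf e}_r)$ arises only from the first form, which yields the penultimate assertion of the corollary at once.

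The substantive case is the ``return'' $2$-arcs, namely those whose middle vertex $v_2$ again has first coordinate $0$, such as $(0,{\bf 0}) \adj (1,{\bf e}_r) \adj (0,{\bf e}_r)$ or $(0,{\bf 0}) \adj (1,{\bf 0}) \adj (0,-{\bf e}_r)$. Here I expect the count always to be $k$, arising as the contribution of one ``generic'' form together with a few ``special'' ones: the generic form carries an extra free index ranging over the values distinct from those already fixed by $\alpha$, contributing $k\!-\!2$ cycles when $\alpha$ fixes one index and $k\!-\!3$ when it fixes two, while each special form pins all indices and contributes $1$; the numbers of special forms are $2$ and $3$ respectively, so the total is $1 + (k\!-\!2) + 1 = k$ or $1 + 1 + (k\!-\!3) + 1 = k$. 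For instance $(0,{\bf 0}) \adj (1,{\bf e}_r) \adj (0,{\bf e}_r)$ is the forward $2$-arc of the sixth form (generic, giving $k\!-\!2$ cycles as $s$ varies over the values $\neq r$), the forward $2$-arc of the seventh, and the backward $2$-arc of the first; this establishes both the value $k$ and the final assertion of the corollary. The ``when $k > 2$'' provisos are absorbed automatically, since for $k = 2$ the two-index return types are vacuous and the generic contribution $k\!-\!2$ vanishes, leaving the two special $1$'s.

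The main obstacle will be organizational rather than conceptual: one must read off all $32$ $2$-arcs without slips, assign each to its correct type, and — most delicately — track the index ranges so that the count-$k$ totals come out to exactly $k$ and not $k\!-\!1$ or $k\!+\!1$; the pivotal point is that the generic form's free index must omit the already-fixed value(s), producing $k\!-\!2$ or $k\!-\!3$ rather than $k\!-\!1$ or $k\!-\!2$. A companion care is to enumerate the undirected cycles without repetition: a few forms (for example the sixth, and similarly the eighth, fourteenth and fifteenth) are carried to themselves by reversal with their indices permuted, and for these one should count unordered index-sets so as not to double count. A convenient global sanity check is that the counts, summed over all $2$-arc types weighted by the number of $2$-arcs of each type, must equal twice the number of $6$-cycles through $v_0$, since every such cycle supplies exactly two $2$-arcs at $v_0$.
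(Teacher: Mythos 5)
Your proposal is correct and follows essentially the same route as the paper, whose proof of this corollary is simply that it ``follows easily from inspection of the list of $6$-cycles given in Lemma~\ref{sixcycles}, and their reverses'' --- precisely your tabulation of forward and backward $2$-arcs over the sixteen listed forms. You have merely made the inspection explicit, and your two care points (restricting the free index to values distinct from those fixed by the $2$-arc, and counting reversal-symmetric forms by unordered index-sets to avoid double counting) are exactly the checks needed to make the counts come out to $0$, $1$ or $k$ as stated.
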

 
\begin{proof}
This follows easily from inspection of the list of $6$-cycles given in Lemma~\ref{sixcycles},  
and their reverses. 
\end{proof}

At this stage, we could repeat the above calculations for $2$-arcs and $3$-arcs with first 
vertex $(1,{\bf 0}) = (1,0,0,\dots,0)$, but it is much easier to simply apply the 
automorphism $\tau$ defined in Section~\ref{further}.  

Hence in particular, the $2$-arcs $\,v_0 \adj v_1 \adj v_2$ with first 
vertex $v_0 = (1,{\bf 0})$ 
that lie 
in a unique $6$-cycle are those of the form $(1,{\bf 0}) \adj (2,{\bf 0}) \adj (3,4{\bf e}_r)$,  
or $(1,{\bf 0}) \adj (2,2{\bf e}_r) \adj (3,2{\bf e}_r)$, 
or $(1,{\bf 0}) \adj (2,2{\bf e}_r) \adj (3,2{\bf e}_r\!+\!4{\bf e}_s)$ when $k > 2$, 
or $(1,{\bf 0}) \adj (0,{\bf 0}) \adj (-1,-2^{-1}{\bf e}_r)$, 
or $(1,{\bf 0}) \adj (0,-{\bf e}_r) \adj (-1,-{\bf e}_r)$, 
or $(1,{\bf 0}) \adj (0,-{\bf e}_r) \adj (-1,-{\bf e}_r\!-\!2^{-1}{\bf e}_s)$ when $k > 2$. 

\medskip
Let $v$ and $w$ be the neighbouring vertices $(0,{\bf 0})$ and $(1,{\bf 0})$. 
We will show that there is no automorphism of $X$ that reverses the arc $(v,w)$. 

\medskip
Let $A = \{(2,2{\bf e}_r) : 1 \le r < k\}$, which is the set of all 
vertices $x$ in $X$ that extend the arc $(v,w)$ to a $2$-arc $(v,w,x)$ which lies in a unique $6$-cycle, 
and similarly, let $B = \{(0,-{\bf e}_r) : 1 \le r < k\}$, the set of all 
vertices that extend $(v,w)$ to a $2$-arc which lies in $k$ different $6$-cycles.
Also let $C = \{(-1,-2^{-1}{\bf e}_r) : 1 \le r < k\}$ and $D = \{(1,{\bf e}_r) : 1 \le r < k\}$  
be the analogous sets of vertices extending 
the arc $(w,v)$, as illustrated in Figure~\ref{ABCD}. 

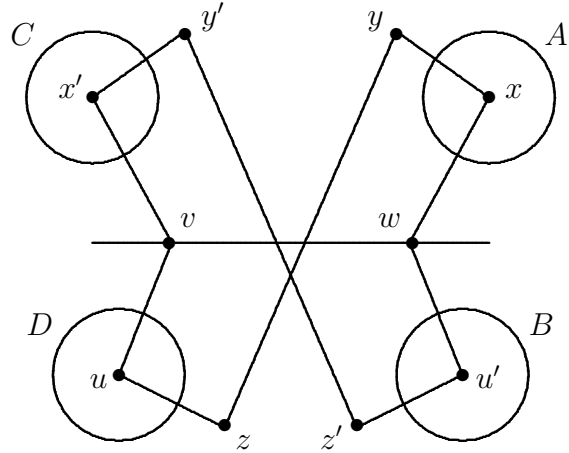
\begin{figure}[h] 
\begin{center} 
$$
\begin{picture}(240,170)(0,-50)
{\linethickness{0.8pt}
\put(0,0){\curve(50,30,200,30)}
\put(0,0){\curve(80,30,50,85)}
\put(0,0){\curve(170,30,200,85)}
\put(0,0){\curve(80,30,60,-20)}
\put(0,0){\curve(170,30,190,-20)}
\put(0,0){\curve(50,85,85,110)}
\put(0,0){\curve(200,85,165,110)}
\put(0,0){\curve(60,-20,100,-40)}
\put(0,0){\curve(190,-20,150,-40)}
\put(0,0){\curve(85,110,150,-40)}
\put(0,0){\curve(165,110,100,-40)}
\put(76,27){\makebox(0,0)[bl]{$\bullet$}}
\put(168,27){\makebox(0,0)[bl]{$\bullet$}}
\put(47,82){\makebox(0,0)[bl]{$\bullet$}}
\put(197,82){\makebox(0,0)[bl]{$\bullet$}}
\put(57,-23){\makebox(0,0)[bl]{$\bullet$}}
\put(187,-23){\makebox(0,0)[bl]{$\bullet$}}
\put(82,106){\makebox(0,0)[bl]{$\bullet$}}
\put(162,106){\makebox(0,0)[bl]{$\bullet$}}
\put(97,-42){\makebox(0,0)[bl]{$\bullet$}}
\put(147,-42){\makebox(0,0)[bl]{$\bullet$}}
\put(50,85){\bigcircle{50}}
\put(200,85){\bigcircle{50}}
\put(60,-20){\bigcircle{50}}
\put(190,-20){\bigcircle{50}}
\put(19,105){\makebox(0,0)[bl]{$C$}}
\put(221,105){\makebox(0,0)[bl]{$A$}}
\put(25,-5){\makebox(0,0)[bl]{$D$}}
\put(215,-5){\makebox(0,0)[bl]{$B$}}
\put(83,36){\makebox(0,0)[bl]{$v$}}
\put(158,36){\makebox(0,0)[bl]{$w$}}
\put(206,85){\makebox(0,0)[bl]{$x$}}
\put(37,85){\makebox(0,0)[bl]{$x'$}}
\put(154,110){\makebox(0,0)[bl]{$y$}}
\put(91,110){\makebox(0,0)[bl]{$y'$}}
\put(104,-48){\makebox(0,0)[bl]{$z$}}
\put(136,-48){\makebox(0,0)[bl]{$z'$}}
\put(49,-25){\makebox(0,0)[bl]{$u$}}
\put(195,-25){\makebox(0,0)[bl]{$u'$}}
}
\end{picture}
$$
\caption{\label{ABCD}$6$-cycles containing the edge from $v = (0,{\bf 0})$ to $w = (1,{\bf 0})$} 
\end{center} 
${}$\vskip -1cm 
\end{figure}

Now suppose there exists an automorphism $\xi$ of $X$ that interchanges the 
two vertices of the edge $\{v,w\}$.  
Then by considering the numbers of $6$-cycles that contain a given $2$-arc, 
we see that $\xi$ must interchange the sets $A$ and $C$, and interchange the 
sets $B$ and $D$.   

\smallskip
Next, observe that if $x = (2,2{\bf e}_r) \in A$, then the unique $6$-cycle containing 
the $2$-arc $v \adj w \adj x$ is $v \adj w \adj x \adj y \adj z \adj u \adj v$, 
where $(y,z,u) = ((1,2{\bf e}_r), (0,{\bf e}_r),(1,{\bf e}_r))$; 
in particular, the $6$th vertex $u = (1,{\bf e}_r)$ lies in $D$. 
Similarly, if $x' = (-1,-2^{-1}{\bf e}_r) \in C$, then the unique $6$-cycle containing 
the $2$-arc $w \adj v \adj x'$ is $w \adj v \adj x' \adj y' \adj z' \adj u' \adj w$, 
where $(y',z',u') = ((0,-2^{-1}{\bf e}_r), (-1,-{\bf e}_r), (0,-{\bf e}_r))$, 
and the $6$th vertex $u' = (0,-{\bf e}_r)$ lies in $B$. 

\smallskip
The arc-reversing automorphism $\xi$ must take every $6$-cycle 
of the first kind to a $6$-cycle 
of the second kind, and hence 
must take each $2$-arc of the form $v \adj u \adj z$ ($= (0,{\bf 0}) \adj (1,{\bf e}_r) \adj (0,{\bf e}_r)$)  
to a $2$-arc of the form $w \adj u' \adj z'$ ($= (1,{\bf 0}) \adj (0,-{\bf e}_r) \adj (-1,-{\bf e}_r)$).  
By Corollary~\ref{count6cycles}, however, each $2$-arc of the 
form $(0,{\bf 0}) \adj (1,{\bf e}_r) \adj (0,{\bf e}_r)$ lies in $k$ different $6$-cycles, 
while each $2$-arc of the form $(1,{\bf 0}) \adj (0,-{\bf e}_r) \adj (-1,-{\bf e}_r)$ 
is the $\tau$-image of the $2$-arc $(0,{\bf 0}) \adj (-1,-2^{-1}{\bf e}_r) \adj (-2,-2^{-1}{\bf e}_r)$ 
and hence lies in only one $6$-cycle. 
This is a contradiction, and shows that no such arc-reversing automorphism exists. 

\smallskip
Hence the Bouwer graph $B(k,m,n)$ is half-arc-transitive whenever $m > 6$ and $n > 7$. 

\bigskip
If $m \le 6$, then the order $m_2$ of $2$ as a unit mod $n$ is at most $6$, 
and so $n$ divides $2^{m_2} - 1 = 3, 7, 15, 31$ or $63$. 
In particular, if $m _2 = 2$, $3$, $4$ or $5$ then $n \in  \{3\}$, $\{7\}$, $\{5,15\}$ or $\{31\}$ 
respectively, while if $m_2 = 6$, then $n$ divides $63$ but not $3$ or $7$, 
so $n \in \{9, 21, 63\}$.
Conversely, if $n = 3, 5, 7, 9,$ $ 15, 21, 31$ or $63$, 
then $m_2 = 2$, $4$, $3$, $6$, $4$, $6$, $5$ or $6$, 
respectively, and of course in each case, $m$ is a multiple of $m_2$.  

\smallskip
We deal with these exceptional cases in the next two sections.

\section{Arc-transitive cases} 
\label{sec:AT} 

The following observations are very easy to verify. 
\medskip

When $n = 3$ (and $m$ is even), the Bouwer graph $B(k,m,n)$ is always arc-transitive,
for in that case there is an automorphism that takes each 
vertex $(a,{\bf b})$ to $(1\!-\!a,-{\bf b})$, and this 
reverses the arc from $(0,{\bf 0})$ to $(1,{\bf 0})$.

\medskip
Similarly, when $k = 2$ and $n = 5$ (and $m$ is divisible by $4$), 
the graph $B(k,m,n)$ is always arc-transitive, 
since it has an automorphism that takes  $(a,b_2)$ 
to $(1\!-\!a,-b_2)$ for all $a \equiv 0$ or $1$ mod $4$, 
and to $(1\!-\!a,2-b_2)$ for all $a \equiv 2$ or $3$ mod $4$,
and again this interchanges $(0,{\bf 0})$ with $(1,{\bf 0})$.

\medskip
Next, $B(2,m,7)$ is arc-transitive when $m = 3$ or $6$, 
because in each of those two cases there is an automorphism that takes
\\[-16pt] 
\begin{center}
\begin{tabular}{lll} 
$\quad(a,0)$ to $(1\!-\!a,0)$, \quad \ & 
$(a,1)$ to $(a\!+\!1,2)$,  \quad \   & 
$(a,2)$ to $(a\!-\!1,1)$, \\[+1 pt]  
$\quad (a,3)$ to $(\!-\!a,6)$, \quad \  &
$(a,4)$ to $(a\!+\!3,4)$, \quad \ & 
$(a,5)$ to $(1\!-\!a,5)$, \\[+1 pt] 
$\quad (a,6)$ to $(1\!-\!a,3)$, &  & \\[-6pt] 
\end{tabular} 
\end{center}
for every $a \in \Z_m$.  

\medskip
Similarly, $B(2,6,21)$ is arc-transitive since it has an automorphism taking 
\\[-16pt] 
\begin{center}
\begin{tabular}{lll} 
$\quad  (a,0)$ to $(1\!-\!a,0)$, \quad  \ & 
$(a,1)$ to $(a\!+\!1,2)$,  \quad \ & 
$(a,2)$ to $(a\!-\!1,1)$, \\[+1 pt]  
$\quad  (a,3)$ to $(5\!-\!a,6)$, \quad \ &
$(a,4)$ to $(a\!+\!3,11)$, \quad \ & 
$(a,5)$ to $(5\!-\!a,19)$, \\[+1 pt] 
$\quad  (a,6)$ to $(5\!-\!a,3)$, \quad &  
$(a,7)$ to $(1\!-\!a,14)$, \quad \ & 
$(a,8)$ to $(a\!+\!1,16)$, \\[+1 pt] 
$\quad  (a,9)$ to $(a\!-\!1,15)$, \quad &  
$(a,10)$ to $(5\!-\!a,20)$, \quad \ & 
$(a,11)$ to $(a\!+\!3,4)$, \\[+1 pt] 
$\quad  (a,12)$ to $(5\!-\!a,12)$, \quad &  
$(a,13)$ to $(5\!-\!a,17)$, \quad \ & 
$(a,14)$ to $(1\!-\!a,7)$, \\[+1 pt] 
$\quad  (a,15)$ to $(a\!+\!1,9)$, \quad &  
$(a,16)$ to $(a\!-\!1,8)$, \quad \ & 
$(a,17)$ to $(5\!-\!a,13)$, \\[+1 pt] 
$\quad  (a,18)$ to $(a\!+\!3,18)$, \quad &  
$(a,19)$ to $(5\!-\!a,5)$, \quad \ & 
$(a,20)$ to $(5\!-\!a,10)$, \\[-3pt] 
\end{tabular} 
\end{center}
for every $a \in \Z_m$.

\section{Other half-arc-transitive cases} 
\label{sec:other} 

In this final section, we give sketch proofs of half-arc-transitivity in all the remaining cases.
We first consider the three cases where $m$ and the multiplicative order of $2$ mod $n$ 
are both equal to $6$ (and $n = 9$, $21$ or $63$), 
and then deal with the cases where the multiplicative order of $2$ mod $n$ is less than $6$ 
(and $n = 5$, $7$, $15$ or $31$).  In the cases $n = 15$ and $n = 31$, 
we can assume that $m < 6$ as well. 

Note that one of these remaining cases is the one considered by Bouwer, 
namely where $(m,n) = (6,9)$.  As this is one of the exceptional cases, it is not representative 
of the generic case --- which may help explain why previous attempts to generalise 
Bouwer's approach did not get very far.

\smallskip
To reduce unnecessary repetition, we will introduce some further notation that will be used in most of these cases. 
As in Section~\ref{sec:main}, we let $v$ and $w$ be the vertex $(0,{\bf 0})$ and its 
neighbour $(1,{\bf 0})$.   

Next, for all $i \ge 0$, we define $V^{(i)}$ as the set of all 
vertices $x$ in $X$ for which $v \adj w \adj x $ is a $2$-arc that lies in exactly $i$ different $6$-cycles, 
and $W^{(i)}$ as the analogous set of vertices $x'$ in $X$ for which $w \adj v \adj x'$ 
is a $2$-arc that lies in exactly $i$ different $6$-cycles. 
(Hence, for example, the sets $A$, $B$, $C$ and $D$ used in Section~\ref{sec:main} 
are $V^{(1)}$, $V^{(k)}$, $W^{(1)}$ and $W^{(k)}$, respectively.) 

Also for  $i \ge 0$ and $j \ge 0$,  we define $T_{v}^{\,(i,j)}$ as 
the set of all $2$-arcs $v \adj u \adj z$ that come from a $6$-cycle of the
form $v \adj w \adj x \adj y \adj z \adj u \adj v$ with $x \in V^{(i)}$ and $u \in W^{(i)}$,  
and lie in exactly $j$ different $6$-cycles altogether, 
and define $T_{w}^{\,(i,j)}$ as the analogous set of all $2$-arcs $w \adj u' \adj z'$ 
that come from a $6$-cycle of the form $w \adj v \adj x' \adj y' \adj z' \adj u' \adj w$ 
with $x' \in W^{(i)}$ and $u' \in V^{(i)}$, and lie in exactly $j$ different $6$-cycles altogether.  

\smallskip
Note that if the  graph under consideration is arc-transitive, then it has an 
automorphism $\xi$ that reverses the arc $(v,w)$, and then clearly $\xi$ must 
interchange the two sets $V^{(i)}$ and $W^{(i)}$, for each $i$. 
Hence also $\xi$ interchanges the two sets 
$T_{v}^{\,(i,j)}$ and $T_{w}^{\,(i,j)}$ for all $i$ and $j$, 
and therefore $|T_{v}^{\,(i,j)}| = |T_{w}^{\,(i,j)}|$ for all $i$ and $j$. 
Equivalently,  if  $|T_{v}^{\,(i,j)}| \ne |T_{w}^{\,(i,j)}|$ for some pair $(i,j)$, 
then the graph cannot be arc-transitive, and therefore must be half-arc-transitive. 

\smallskip
The approach taken in Section~\ref{sec:main} was similar, but compared the 
$2$-arcs $v \adj u \adj z$ that come from a $6$-cycle of the
form $v \adj w \adj x \adj y \adj z \adj u \adj v$ where $x \in V^{(1)}$ and $u \in W^{(k)}$,  
with the $2$-arcs $w \adj u' \adj z'$ 
that come from a $6$-cycle of the form $w \adj v \adj x' \adj y' \adj z' \adj u' \adj w$ 
where $x' \in W^{(1)}$ and $u' \in V^{(k)}$.

\smallskip
We proceed by considering the first three cases below, in which the girth of the Bouwer 
graph $B(k,m,n)$ is $6$, but the numbers of $6$-cycles containing a given arc or $2$-arc are 
different from those found in Section~\ref{sec:main}.

\subsection{The graphs $B(k,6,9)$}
Suppose $m = 6$ and $n = 9$.  
This case was considered by Bouwer in \cite{Bouwer}, but it can also 
be dealt with in a similar way to the generic case in Section~\ref{sec:main}.  

Every $2$-arc lies in either $k$ or $k+1$ cycles of length $6$, 
and each arc lies in exactly $k$ distinct $2$-arcs of the first kind, 
and $k-1$ of the second kind. 
Next, the set $V^{(k)}$ consists of $(2,{\bf 0})$ and $(0,-{\bf e}_r)$ for $1 \le r < k$, 
while $W^{(k)}$ consists of $(-1,{\bf 0})$ and $(1,{\bf e}_s)$ for $1 \le s < k$. 

Now consider the $2$-arcs $v \adj u \adj z$ that come from a $6$-cycle of the
form $v \adj w \adj x \adj y \adj z \adj u \adj v$ with $x \in V^{(k)}$ and $u \in W^{(k)}$. 
There are $k^{2}-2k+2$ such $2$-arcs, and of these, $k\!-\!1$ lie in $k\!+\!1$ 
different $6$-cycles altogether  (namely the $2$-arcs of the form $(0,{\bf 0}) \adj (1,{\bf e}_s) \adj (2,{\bf e}_s)$), 
while the other $k^{2}-3k+3$ lie on only $k$ different 6-cycles.
In particular, $|T_{v}^{\,(k,k+1)}| = k\!-\!1$. 

On the other hand, among the $2$-arcs $w \adj u' \adj z'$ 
coming from a $6$-cycle of the form $w \adj v \adj x' \adj y' \adj z' \adj u' \adj w$ 
with $x' \in W^{(k)}$ and $u' \in V^{(k)}$, 
none lies in $k+1$ different 6-cycles.
Hence $|T_{w}^{\,(k,k+1)}| = 0$. 
Thus $|T_{v}^{\,(k,k+1)}|  \ne |T_{w}^{\,(k,k+1)}|$, and so the Bouwer graph $B(k,6,9)$ 
cannot be arc-transitive, and is therefore half-arc-transitive.

\subsection{The graphs $B(k,6,21)$ for $k > 2$}
Next, suppose $m = 6$ and $n = 21$, and $k > 2$. 
(The case $k = 2$ was dealt with in Section~\ref{sec:AT}.) 
Here every $2$-arc lies in one, two or $k$ cycles of length $6$, 
and each arc lies in exactly one $2$-arc of the first kind, 
and $k-1$ distinct $2$-arcs of each of the second and third kinds. 
The set $V^{(k)}$ consists of the $k\!-\!1$ vertices of the form $(0,-{\bf e}_r)$, 
while $W^{(k)}$ consists of the $k\!-\!1$ vertices of the form $(1,{\bf e}_s)$. 
(Note that this does not hold when $k = 2$.) 
Also $T_{v}^{\,(k,2)}$ consists of the $2$-arcs of the form $(0,{\bf 0}) \adj (1,{\bf e}_s) \adj (2,{\bf e}_s)$, 
so $|T_{v}^{\,(k,2)}| = k\!-\!1$,  
but on the other hand, $T_{w}^{\,(k,2)}$ is empty. 
Hence there can be no automorphism that reverses the arc $(v,w)$, 
and so the graph is half-arc-transitive.

\subsection{The graphs $B(k,6,63)$}
Suppose  $m = 6$ and $n = 63$. Then 
every $2$-arc lies in either one or $k$ different cycles of length $6$, 
and each arc lies in exactly $k$ $2$-arcs of the first kind, 
and $k\!-\!1$ of the second kind. 
The sets $V^{(k)}$ and $W^{(k)}$ are precisely as in the previous case, 
but for all $k \ge 2$, and in this case $T_{v}^{\,(k,1)}$ consists of the $2$-arcs of 
the form $(0,{\bf 0}) \adj (1,{\bf e}_s) \adj (2,{\bf e}_s)$, 
so $|T_{v}^{\,(k,1)}| = k\!-\!1$,  but on the other hand, $T_{w}^{\,(k,1)}$ is empty. 
Hence there can be no automorphism that reverses the arc $(v,w)$, 
and so the graph is half-arc-transitive.

\bigskip
Now we turn to the cases where $n = 5$, $7$, $15$ or $31$. 
In these cases, the order of $2$ as a unit mod $n$ is $4$, $3$, $4$ or $5$, 
respectively, and indeed when $n = 15$ or $31$ we may suppose 
that $m = 4$ or $m = 5$, while the cases $n = 5$ and $n = 7$ are much more tricky.

\subsection{The graphs $B(k,4,15)$}
Suppose $n = 15$ and $m = 4$.  
Then the girth is $4$, with $2k$ different $4$-cycles passing through the vertex $(0,{\bf 0})$. 
Apart from this difference, the approach taken in Section~\ref{sec:main} for counting $6$-cycles 
still works in the same way as before. 
Hence the graph $B(k,4,15)$ is half-arc-transitive 
for all $k \ge 2$.

\subsection{The graphs $B(k,5,31)$}
Similarly, when  $n = 31$ and $m = 5$, the girth 
is $5$, and the same approach as taken in Section~\ref{sec:main} using $6$-cycles 
works, to show that the graph $B(k,5,31)$ is half-arc-transitive 
for all $k \ge 2$.

\subsection{The graphs $B(k,m,5)$ for $k > 2$}
Suppose $n = 5$ and $k > 2$.  
(The case $k = 2$ was dealt with in Section~\ref{sec:AT}.) 
Here we have $m \equiv 0$ mod $4$, and the number of $6$-cycles is 
much larger than in the generic case considered in Section~\ref{sec:main} 
and the cases with $m = 6$ above, but a similar argument works. 

\smallskip
When $m > 4$, the girth of $B(k,m,n)$ is $6$, and every $2$-arc lies in 
either $2k$, $2k+3$ or $4k-4$ cycles of length 6. 
Also $V^{(2k+3)}$ consists of the $k\!-\!1$ vertices of the form $(0,-{\bf e}_r)$, 
while $W^{(2k+3)}$ consists of the $k\!-\!1$ vertices of the form $(1,{\bf e}_s)$,  
and then $T_{v}^{\,(2k+3,2k)}$ consists of the $2$-arcs of the form $(0,{\bf 0}) \adj (1,{\bf e}_s) \adj (2,{\bf e}_s)$, 
so $|T_{v}^{\,(2k+3,2k)}| = k\!-\!1$,  
but on the other hand, $T_{w}^{\,(2k+3,2k)}$ is empty. 

\smallskip
When $m = 4$, the girth of $B(k,m,n)$ is $4$, and the situation is similar, but slightly different. 
In this case, every $2$-arc lies in either $2k+2$, $2k+5$ or $6k-6$ cycles of length 6,  
and $|T_{v}^{\,(2k+5,2k+2)}| = k\!-\!1$ while $|T_{w}^{\,(2k+5,2k+2)}| = 0$. 

\smallskip 
Once again,  it follows that no automorphism can reverse the arc $(v,w)$, 
and so the graph is half-arc-transitive, for all $k > 2$ and all $m \equiv 0$ mod $4$.

\subsection{The graphs $B(k,m,7)$ for $(k,m) \ne (2,3)$ or $(2,6)$}
Suppose finally that $n = 7$, with $m \equiv 0$ mod $3$, but $(k,m) \ne (2,3)$ or $(2,6)$. 
Here $m \equiv 0$ mod $3$, and we treat four sub-cases separately: 
(a) $k = 2$ and $m > 6$; (b) $k > 2$ and $m = 3$; (c) $k > 2$ and $m = 3$;  
and (d)  $k > 2$ and $m > 6$. 

\medskip
In case (a), where $k = 2$ and $m > 6$, every $2$-arc lies in $1$, $2$ 
or $3$ cycles of length $6$.
Also the set $V^{(3)}$ consists of the single vertex $(0,-1)$, 
while $W^{(3)}$ consists of the single vertex $(1,1)$,  
and then $T_{v}^{\,(3,1)}$ consists of the single $2$-arc $(0,0) \adj (1,1) \adj (2,1)$, 
so $|T_{v}^{\,(3,1)}| = 1$,  
but on the other hand, $T_{w}^{\,(3,1)}$ is empty. 

\smallskip
In case (b), where $k > 2$ and $m = 3$, every $2$-arc lies in $0$, $2$ 
or $k$ cycles of length $6$. 
In this case the set $V^{(k)}$ consists of the $k\!-\!1$ vertices of the form $(0,-{\bf e}_r)$, 
while $W^{(k)}$ consists of the $k\!-\!1$ vertices of the form $(1,{\bf e}_s)$,  
and then $T_{v}^{\,(k,2)}$ consists of the $2$-arcs of the form $(0,{\bf 0}) \adj (1,{\bf e}_s) \adj (2,{\bf e}_s)$, 
so $|T_{v}^{\,(k,2)}| = k-1$,  
but on the other hand, $T_{w}^{\,(k,2)}$ is empty. 

\smallskip
In case (c), where $k > 2$ and $m = 6$, every $2$-arc lies in $3$, $k\!+\!1$ 
or $4k\!-\!3$ cycles of length $6$. 
Here $V^{(k+1)}$ consists of the $k\!-\!1$ vertices of the form $(0,-{\bf e}_r)$, 
while $W^{(k+1)}$ consists of the $k\!-\!1$ vertices of the form $(1,{\bf e}_s)$,  
and then $T_{v}^{\,(k+1,3)}$ consists of the $2$-arcs of the form $(0,{\bf 0}) \adj (1,{\bf e}_s) \adj (2,{\bf e}_s)$, 
and so $|T_{v}^{\,(k+1,3)}| = k-1$,  
but on the other hand, $T_{w}^{\,(k+1,3)}$ is empty. 

\smallskip
In case (d), where $k > 2$ and $m > 6$, every 2-arc lies in $1$, $k\!+\!1$ 
or $2k\!-\!2$ cycles of length $6$.
Next, if $k > 3$ then $V^{(k+1)}$ consists of the $k\!-\!1$ vertices 
of the form $(0,-{\bf e}_r)$, 
while $W^{(k+1)}$ consists of the $k\!-\!1$ vertices of the form $(1,{\bf e}_s)$,
but if $k = 3$ then $k+1 = 2k-2$, and $V^{(k+1)}$ contains also $(2,{\bf 0})$ 
while $W^{(k+1)}$ contains also $(-1,{\bf 0})$.   
Whether $k = 3$ or $k > 3$, the set $T_{v}^{\,(k+1,1)}$ consists of the $2$-arcs of the 
form $(0,{\bf 0}) \adj (1,{\bf e}_s) \adj (2,{\bf e}_s)$, and so $|T_{v}^{\,(k+1,1)}| = k-1$,  
but on the other hand, $T_{w}^{\,(k+1,1)}$ is empty. 

\smallskip 
Hence in all four of cases (a) to (d), no automorphism can reverse the arc $(v,w)$, 
and therefore the graph is half-arc-transitive. 

\smallskip
This completes the proof of our Theorem.  

\bigskip\bigskip

\noindent 
{\Large\bf Acknowledgements}
\bigskip

This work was undertaken while the second author was visiting the University 
of Auckland in 2014, and was partially supported by the ARRS (via grant P1-0294), 
the European Science Foundation (via EuroGIGA/GReGAS grant N1-0011), 
and the N.Z. Marsden Fund (via grant UOA1323).
The authors would also like to thank Toma{\v z} Pisanski and Primo{\v z} Poto\v{c}nik 
for discussions which encouraged them to work on this topic, 
and acknowledge considerable use of the {\sc Magma} system~\cite{Magma} for 
computational experiments and verification of some of the contents of the paper.


\end{document}